\documentclass[12pt]{amsart}
\usepackage{fullpage,url,amssymb, enumerate,colonequals}
\usepackage{mathtools}
\usepackage{mathrsfs} 
\usepackage{caption}
\usepackage{MnSymbol}
\usepackage{extarrows}
\usepackage{lscape}
\usepackage{xfrac}
\usepackage[all,cmtip]{xy}
\usepackage{tablefootnote}
\usepackage[OT2,T1]{fontenc}
\usepackage{color}
\usepackage{marginnote}

\usepackage{xr-hyper}

\usepackage[
        colorlinks, citecolor=darkgreen,
        backref,
        pdfauthor={}, 
]{hyperref}
\usepackage{cleveref}
\usepackage{comment}
\usepackage{todonotes}

\usepackage[pdftex]{epsfig}

\numberwithin{equation}{section}

\newtheorem{lemma}[equation]{Lemma}

\newtheorem{prop}[equation]{Proposition}

\newtheorem{claim*}{Claim}
\newtheorem{theorem}[equation]{Theorem}

\newtheorem{ques}[equation]{Question}
\newtheorem{example}[equation]{Example}
\newtheorem{remark}[equation]{Remark}

\definecolor{darkgreen}{rgb}{0,0.5,0}
\definecolor{rem}{rgb}{0.8,0,0}
\definecolor{new}{rgb}{0.3,0.1,0.9}
\definecolor{reply}{rgb}{0,0,0.8}
\definecolor{gray}{gray}{0.7}

\renewcommand{\det}{\text{det}}

\newcommand{\F}{\mathbb{F}}

\newcommand{\Q}{\mathbb{Q}}

\newcommand{\Z}{\mathbb{Z}}
\newcommand{\rhobar}{{\overline{\rho}}}
\newcommand{\eps}{\varepsilon}

\newcommand{\Qbar}{{\overline{\Q}}}
\newcommand{\Gal}{\text{Gal}}
\newcommand{\Frob}{\text{Frob}}

\newcommand{\GL}{\text{GL}}

\DeclareMathOperator{\tr}{tr}

\newcommand{\oo}{\mathcal{O}}

\newcommand{\gl}[2]{\text{GL}_{#1}(#2)}
\newcommand{\rsr}[1]{\overline{#1}^{ss}}
\newcommand{\rr}[1]{\overline{#1}}

\newcommand{\LMFDBE}[1]{\href{https://www.lmfdb.org/EllipticCurve/Q/#1}{\textsf{#1}}}

\newcommand{\LMFDBN}[1]{\href{https://www.lmfdb.org/ModularForm/GL2/Q/holomorphic/#1}{\textsf{#1}}}

\title{Congruences and ramified primes in fields of coefficients of newforms}
\author{Nuno Freitas and Filip Gawron}
\date{\today}

\begin{document}
\maketitle
\begin{abstract}
    We investigate the splitting behavior of $\ell$ in the coefficient field of a newform~$f$ of level $N$, under the assumption that $f$ is congruent modulo a prime above $\ell$ to another newform $g$ whose level divides $N/p^2$ for some prime $p|N$. In particular, we show that the maximal real subfield of the $\ell$-th cyclotomic field, $\Q(\zeta_\ell + \zeta_\ell^{-1})$, is contained in the coefficient field of $f$. We conclude by presenting explicit examples that illustrate these results.
\end{abstract}
\section{Introduction}
Let $f\in S_k(\Gamma_0(N),\varepsilon)$ be a newform of weight $k \geq 2$, level $N$ and nebentypus $\varepsilon$. It is a standard fact that~$f$ admits a Fourier expansion
$f(q)=\sum_{n=1}^\infty a_n(f)q^n$
and a field of coefficients
$\Q_f:=\Q(\{a_n(f)\}_{n\geq 1})$
which is a finite extension of~$\Q$.

Although modular forms have been extensively studied, there are still many open questions about the field of coefficients $\Q_f$. Here we are particularly interested in the following.
\begin{ques}
    What can be said about the splitting behaviour of primes in $\Q_f$ in terms of the level $N$ and the nebentypus~$\varepsilon$?
\end{ques}

It is well known that $\Q_f$ always contains all the values taken by~$\varepsilon$ (e.g. \cite[Corollary 3.1]{Ribneb}). An example of a result in terms of $N$ is due to Brumer (see \cite[Theorem 5.5]{Bru}) who proved that if
$\ell^\alpha$ divides $N$, where $\ell$ is a prime, then $\Q_f$ contains $\Q(\zeta_{\ell^\beta}+\zeta_{\ell^\beta}^{-1})$ for some $\beta=\beta(\alpha)$, an increasing function of $\alpha$.
A result of similar flavor is \cite[Proposition 4.7]{DieWie}, where the authors proved that if $f$ has level $Mp^2$, with $p\nmid M$, and possesses certain inertial type at $p$ (depending on an integer~$n$), then $\Q_f$ contains $\Q(\zeta_n+\zeta_n^{-1}).$

Let $G_\Q := \Gal(\Qbar /\Q)$. A major tool for studying modular forms is its associated Galois representations. More precisely, Deligne~\cite{Del} showed that, to every newform $f$ as above, and any prime $\lambda \mid \ell$ in~$\Q_f$, there is a Galois representation
$\rho_{f,\lambda}:G_\Q\to \gl{2}{\oo_{f,\lambda}}$,
where $\oo_{f,\lambda}$ is the ring of integers of the completion of $\Q_f$ at $\lambda$. Furthermore, this representation is unramified outside~$\ell N$ and has a prime-to-$\ell$ Artin conductor equal to the prime-to-$\ell$ part of~$N$. For any prime $p \neq \ell$, we will refer to the restriction $\rho_{f,\lambda}|_{I_p}$, where $I_p \subset G_\Q$ is an inertia subgroup at~$p$, as the {\it inertial type} of $f$ at~$p$.

Before stating our main result, we need to introduce some notation.
Let $\rr{\rho_{f,\lambda}}:G_\Q\to\gl{2}{\overline{\mathbb{F}_\ell}}$
be the modulo~$\lambda$ reduction of $\rho_{f,\lambda}$ and
denote by $N'$ the prime-to-$\ell$ part of the Artin conductor of
its semisimplification $\rsr{\rho_{f,\lambda}}$.
We will prove that, whenever $v_p(N)> 1 $ and $\overline{\rho_{f,\lambda}}^{ss}$ `loses' ramification at a prime $p\neq \ell$, i.e., $v_p(N')<v_p(N)$, then $\ell$ ramifies in~$\Q_f$.
More precisely, we will prove the following theorem.
\begin{theorem}\label{maintheorem}
Let $f\in S_k(\Gamma_0(N),\varepsilon)$ be a newform of weight $k\geq 2$
and field of coefficients~$\Q_f$. Let $p$ be a prime such that $p^2 \mid N$.
Let $\ell \neq p$ be an odd prime, $\lambda$ a prime of $\Q_f$ dividing~$\ell$, and
$N'$ the prime-to-$\ell$ part of the Artin conductor of $\rsr{\rho_{f,\lambda}}$.
Assume that
$v_p(N')<v_p(N).$
\begin{enumerate}[(a)]
    \item If $v_p(N)=2$, then
     $\Q(\zeta_\ell+\zeta_\ell^{-1})\subset \Q_f.$
     In particular, for any prime $\lambda'$ of $\Q_f$ over $\ell$ we have
    $$\frac{\ell-1}{2}\mid e(\lambda'|\ell),$$
    where $e(\lambda'|\ell)$ is the ramification degree of $\lambda'$ over $\ell$ in $\Q_f$. Moreover, if $v_p(N')=0$ and $\overline{\rho_{f,\lambda}}$ is irreducible then
    $$e(\lambda|\ell)\geq \ell-1.$$
    \item If $v_p(N)>2$, then $\Q(\zeta_\ell)\subset \Q_f$.
\end{enumerate}
\end{theorem}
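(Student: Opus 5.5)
The plan is to work entirely with the local representation at $p$ and to exploit two facts. First, the inertial type of $f$ at $p$ is independent of $\lambda$ and is defined over $\Q_f$ (Carayol's local–global compatibility, together with the fact that $\varepsilon$ takes values in $\Q_f$). Second, for $\sigma\in\mathrm{Aut}(\mathbb{C})$, the conjugate newform $f^\sigma$ has inertial type at $p$ equal to the $\sigma$-conjugate of that of $f$, and it equals $f$ when $\sigma$ fixes $\Q_f$.

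\textbf{Step 1: the type carries an $\ell$-power tame character.} Since $\ell\neq p$, the restriction $\rho_{f,\lambda}|_{I_p}$ factors through a finite quotient on the semisimple part. Write its semisimplification on tame inertia via characters. In the principal series case these are $\chi_1\oplus\chi_2$. In the supercuspidal case they are $\theta\oplus\theta^p$ on the unramified quadratic tame inertia, or the analogous restriction of an induction from a ramified quadratic field. I will show that $v_p(N')<v_p(N)$ forces some character occurring in the type to have order divisible by $\ell$. The reason is that reduction mod $\lambda$ kills exactly the pro-$\ell$ part of the finite image of inertia. In the special (Steinberg-twist) case, the monodromy operator survives mod $\lambda$ in the irreducible situation, or is replaced by the same conductor count. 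In either case it cannot by itself drop the conductor, so the loss comes from a character of $\ell$-power order becoming trivial.

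\textbf{Step 2: reduce to a set of two roots of unity.} Take $\tau$ a suitable power of a tame inertia generator, killing the prime-to-$\ell$ part. The eigenvalues of $\rho_{f,\lambda}(\tau)$ are then $\{\zeta^a,\zeta^b\}$ with $\zeta$ a primitive $\ell$-th root of unity (after further powering) and $a\not\equiv b \pmod \ell$, or at least not both $a,b\equiv 0$. By $\lambda$-independence this multiset is attached to $f$ itself, and its trace and determinant lie in $\Q_f$. Now let $h\in\Gal(\Q(\zeta_\ell)/\Q(\zeta_\ell)\cap\Q_f)\subset(\Z/\ell)^\times$, lifted to $\sigma$ fixing $\Q_f$. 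Then $f^\sigma=f$, so the set $\{a,b\}$ is stable under multiplication by $h$. If $a\neq 0$ this forces $h=1$, or $ha=b$ and $hb=a$, whence $h^2=1$ and $h=-1$. So the group has order at most $2$, and $\Q(\zeta_\ell+\zeta_\ell^{-1})\subset\Q_f$. This is part (a)'s first claim. Since $\ell$ is totally ramified in $\Q(\zeta_\ell+\zeta_\ell^{-1})$, the divisibility $\tfrac{\ell-1}{2}\mid e(\lambda'|\ell)$ follows for every $\lambda'$.

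\textbf{Step 3: the sharper statements.} For part (b), when $v_p(N)>2$ I expect the main obstacle to be excluding the symmetric case $b\equiv -a$. I would try two things. The first is to show from the classification of types of conductor exponent $\ge3$ (twists by wildly ramified characters, or wild supercuspidals) that the $\ell$-part of the type then has eigenvalue set $\{\zeta^a,1\}$ or similar, which is not $h=-1$ stable. Failing that, the second is to use an additional element of the decomposition group, namely a Frobenius lift acting on the induced representation, whose trace together with $\tau$'s breaks the $\pm$ symmetry. For the bound $e(\lambda|\ell)\ge\ell-1$ when $v_p(N')=0$ and $\rhobar$ is irreducible, I would work in the completion $\Q_{f,\lambda}$. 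The entire inertia image at $p$ is then pro-$\ell$ and reduces trivially, so both eigenvalues $\zeta^a,\zeta^b$ are $\equiv1 \pmod\lambda$. Irreducibility prevents a global twist from removing $\zeta^a$, and I would aim to show that $\zeta_\ell$ itself, not just $\zeta_\ell+\zeta_\ell^{-1}$, lies in $\Q_{f,\lambda}$. For this I would use traces of $\rho_{f,\lambda}(\tau g)$ for $g$ in the decomposition group, which lie in $\Q_{f,\lambda}$ by Chebotarev density.
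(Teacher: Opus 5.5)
Your Step 2 is correct, and it proves the first claim of (a), hence $\tfrac{\ell-1}{2}\mid e(\lambda'|\ell)$, uniformly. Once some $\tau\in I_p$ has semisimplified eigenvalues $\zeta^a,\zeta^b$ with $\zeta$ of order $\ell$ and $(a,b)\neq(0,0)$, stability of $\{a,b\}$ under $\Gal(\Q_f(\zeta_\ell)/\Q_f)$ confines this group to $\{\pm1\}$. This is a neat alternative to the case-by-case Lemmas~\ref{theorem a case sp}, \ref{case a ramification preserves}, \ref{theorem a case ps and c}, and it does not need irreducibility. Step 1 should simply invoke Proposition~\ref{Carayolcharacterization} rather than the heuristic about the monodromy operator. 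The other two claims, however, are not established.

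For (b) you only list strategies, and the first one fails as stated: projecting to the $\ell$-part of the type discards exactly the information you need. When $v_p(N)>2$, Proposition~\ref{Carayolcharacterization} gives $\rho\simeq\chi_1\oplus\chi_2$ with $a(\chi_1)=1$ and $a(\chi_2)\ge 2$. Nothing prevents $\chi_2|_{I_p}=\chi_1^{-1}\psi$ with $\psi$ wild of $p$-power order (e.g.\ of conductor $p^2$, so $v_p(N)=3$ and $v_p(N')=2$). Then every element of $\ell$-power order in $\rho(I_p)$ has eigenvalues $\{\zeta^a,\zeta^{-a}\}$, which is stable under $h=-1$. Your second strategy does not fit either, since $\rho$ is a principal series and not induced.

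The missing idea is to use the wild part to tell the two characters apart. Conjugation by $\sigma\in\Gal(\Qbar/\Q_f)$ preserves conductors, so it cannot swap $\chi_1|_{I_p}$ and $\chi_2|_{I_p}$. Hence it fixes $\chi_1|_{I_p}$, whose values contain $\zeta_\ell$. The paper does this explicitly: take $x_0\in I_p$ with $\chi_1(x_0)=1$, $\chi_2(x_0)=\zeta_p$, and $x_1$ with $\chi_1(x_1)=\zeta_\ell$. Summing $\tr\rho(x_1x_0^n)$ over $0\le n<p$ gives $p\zeta_\ell\in\Q_{f,\lambda}$.

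For $e(\lambda|\ell)\ge\ell-1$ you aim at the stronger statement $\zeta_\ell\in\Q_{f,\lambda}$, and nothing forces it. In case (1) with trivial nebentypus one has $\chi_2|_{I_p}=\chi_1^{-1}|_{I_p}$. Then $\chi_1\oplus\chi_2$ is defined over $\Q_{f,\lambda}$ whenever $\chi_2$ is the conjugate of $\chi_1$ under the nontrivial automorphism of $\Q_{f,\lambda}(\zeta_\ell)/\Q_{f,\lambda}$. Locally this is compatible with $\zeta_\ell\notin\Q_{f,\lambda}$, $e(\lambda|\ell)=\ell-1$, and inertia acting trivially mod $\lambda$: take $\Q_{f,\lambda}$ to be the ramified quadratic extension of $\Q_\ell(\zeta_\ell+\zeta_\ell^{-1})$ other than $\Q_\ell(\zeta_\ell)$. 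In that situation every trace $\tr\rho(\tau g)$ is symmetric in $\chi_1,\chi_2$, so none of them isolates $\chi_1(\tau)$. (These traces lie in $\Q_{f,\lambda}$ tautologically; no Chebotarev argument is needed.) Likewise, ``both eigenvalues are $\equiv 1\pmod\lambda$'' is automatic for $\ell$-power roots of unity and carries no information.

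What is needed is that the matrices themselves, not just their eigenvalues, are trivial mod $\lambda$. Irreducibility and $v_p(N')=0$ make $\overline{\rho_{f,\lambda}}$ itself unramified at $p$. So $\rho(I_p)$ is a nontrivial finite subgroup of $I+\pi M_2(\oo_{f,\lambda})$, which is torsion-free if $e<\ell-1$ by Lemma~\ref{torsionfreekrenel}. Concretely, if $\rho(\tau)$ has order $\ell$, then $(\rho(\tau)-I)/\pi$ is integral with eigenvalue $(\zeta_\ell^a-1)/\pi$. This forces $1/e\le v_\ell(\zeta_\ell-1)=1/(\ell-1)$.
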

This theorem generalizes \cite[Lemmas 15 and 16]{DiePaTsa} as it allows for nontrivial nebentypus, but it also provides stronger conclusions. Indeed, 
case~(b) gives the containment of the full cyclotomic field in $\Q_f$ and, moreover, when $v_p(N')=0$, our result establishes the existence of a prime above $\ell$ with ramification degree bigger than what can be deduced from the field containment alone.

The ideas in the proof of this theorem are as follows. The statements about the fields contained in~$\Q_f$ follow from careful analysis of the inertial type of $f$ at $p$ and their reductions. Under the assumption $v_p(N')<v_p(N)$, the classification of the possible inertial types is due to Carayol (see Proposition \ref{Carayolcharacterization}), and we use it as the starting point. The argument is similar to the one in \cite{DiePaTsa}.
The last statement of part $(a)$ is the only non-symmetric statement with respect to the primes $\lambda$ of $\Q_f$ above $\ell$. Its proof goes by imitating the trick used in \cite[\textsection 2, Cor. 2]{Se--Ta}, which relies on the fact that when the ramification degree of $\Q_f/\Q$ is small enough, the kernel of the reduction $\gl{2}{\oo_{f,\lambda}}\to\gl{2}{\overline{\mathbb{F}_\ell}}$
is torsion-free.

\begin{remark} For simplicity, we stated Theorem \ref{maintheorem} for classical newforms, but it also holds for Hilbert newforms of arithmetic weight. Indeed, their field of Hecke eigenvalues has finite degree over $\Q$, the associated Galois representations exist, and the analog of Carayol's lemma holds (see~\cite[Theorems 1.1 and~1.5]{Jar}), so all the necessary tools are there, and in fact, the same proof goes through.
\end{remark}

In the last section, we will use a level raising theorem due to Diamond and Taylor~\cite{DiaTay} to obtain examples of the application of Theorem~\ref{maintheorem}. In particular, we discuss a systematic method to generate newforms with primes that do not divide the level and are highly ramified in their field of coefficients.

\subsection*{Acknowledgements} We thank Luis Dieulefait for helpful conversations.

\section{Auxiliary results}\label{sec:preliminaires}
Let $f=\sum_{n=1}^\infty a_nq^n$ be a newform in $S_k(\Gamma_0(N),\varepsilon)$. We keep the notation of the introduction. The representation $\rho_{f,\lambda}$
is unramified outside $N\ell$, and for all primes $q\nmid N\ell$, the minimal polynomial of
$\rho_{f,\lambda}(\Frob_q)$ is $x^2-a_qx+q^{k-1}\varepsilon(q)$, where $\Frob_q \in G_\Q$ is a Frobenius element at~$q$. The determinant of $\rho_{f,\lambda}$ is equal to $\omega_\ell^{k-1}\varepsilon$, where $\omega_\ell:G_\Q\to \Z_\ell^*$ is the
$\ell$-adic cyclotomic character. Moreover, the set $\{ \rho_{f,\lambda}\}$ forms a compatible system. This means that, in particular, after choosing embeddings $G_{\Q_p}\hookrightarrow G_\Q$ at all primes~$p$, we have that, for $p \neq \ell$, the Weil--Deligne representation associated (via Grothendieck Monodromy Theorem) with~$\rho_{f,\lambda}|_{G_{\Q_p}}$
is independent of $\lambda$ and~$\ell$; see also \cite[Th\'eor\`eme A]{Car2} or \cite[Theorem 1.1]{Jar}.
Furthermore, Weil--Deligne representations fall into three types: Principal Series, Special (or Steinberg), and Supercuspidal (see e.g. \cite[\textsection 1]{DiePaTsa}), and those with infinite image of inertia are exactly the Special representations.

Recall that $N'$ is the prime-to-$\ell$ part of the Artin conductor of $\rsr{\rho_{f,\lambda}}$. When $v_p(N')<v_p(N)$, Carayol
characterized all possible Weil--Deligne types of $f$ at~$p$ (see~\cite[Proposition 2]{Car}) which we list in Proposition~\ref{Carayolcharacterization}, noting that there is a case
in {\it loc. cit} that does not occur under the assumption $p^2|N$ in Theorem~\ref{maintheorem}.
Let $E/\Q_\ell$ be a finite extension with ring of integers $\oo$.
For a representation $\rho:G_{\Q_p}\to\gl{n}{\oo}$
we denote by $a(\rho)$ the conductor exponent of $\rho$.
Let also $\overline{\rho}$ be the reduction of~$\rho$ modulo
the maximal ideal of~$\oo$ and $\rhobar^{ss}$ its semisimplification.

\begin{prop}\label{Carayolcharacterization}
Let $\rho: G_{\Q_p}\to \gl{2}{\oo}$ be as above.
Assume $a(\rho)\geq 2$ and
$$a(\rsr{\rho})<a(\rho).$$
Then $\rho$ satisfies one of the following cases.
    \begin{enumerate}
    \item $\rho$ is a principal series representation given by
    $$\rho = \chi_1\oplus \chi_2$$
    with $\overline{\chi_1}$ unramified. In this case, $a(\rho)=a(\chi_2)+1$ and $a(\rsr{\rho})=a(\overline{\chi_2}).$
    \item $\rho$ is a special representation given by
    $$\rho = \chi \otimes sp(2)$$
    with $a(\chi)=1$ and $a(\overline{\chi})=0$. In this case, we have $a(\rho)=2$ and $a(\rr{\rho})=0.$
    \item $\rho$ is a supercuspidal representation given by
    $$\rho = \text{Ind}_{G_{\Q _{p^2}}}^{G_{\Q_p}}\chi,$$
    where $\Q_{p^2}$ is the unique unramified quadratic extension of $\Q_p$ and the character $\chi$ does not factor through the norm $\Q_{p^2}\to\Q_p$ (where we view $\chi$ as a character of $\Q_{p^2}$ by local class field theory). In this case, $a(\rho)=2$, $a(\chi)=1$ and $a(\overline{\chi})=0.$
\end{enumerate}
\end{prop}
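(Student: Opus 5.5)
The plan is to run through the three Weil--Deligne types for $\rho$ given by Grothendieck's monodromy theorem: principal series, special, and supercuspidal. In each type we compare the conductor exponent $a(\rho)$ with $a(\rsr{\rho})$, using $a(\rho)\geq 2$ and $a(\rsr{\rho})<a(\rho)$.

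For this comparison, recall that $a(\rho)=\dim(V/V^{I_p})+\mathrm{Sw}(\rho)$ in the Galois setting, and for Weil--Deligne representations one uses $\dim V-\dim (V^{I_p})_{N=0}$ plus the Swan conductor. The Swan conductor is determined by the restriction to wild inertia $P_p$, which is pro-$p$. Since $\ell\neq p$, reduction modulo the maximal ideal of $\oo$ is injective on the image of $P_p$, so the Swan conductors of $\rho$ and $\rsr{\rho}$ coincide. Hence any drop in conductor comes only from the tame part: a drop in $\dim V-\dim V^{I_p}$, or the loss of the monodromy operator.

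We then treat the three types in turn.

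\textbf{Principal series}, $\rho=\chi_1\oplus\chi_2$. Here $a(\rho)=a(\chi_1)+a(\chi_2)$ and $a(\rsr{\rho})=a(\overline{\chi_1})+a(\overline{\chi_2})$. Each $a(\chi_i)-a(\overline{\chi_i})\in\{0,1\}$, and it equals $1$ exactly when $\chi_i$ is tamely ramified with $\overline{\chi_i}$ unramified. For this, note that $a(\chi_i)=0$ or $1$ for tame characters, and wild characters keep their conductor. So a drop forces, after reordering, $a(\chi_1)=1$ and $\overline{\chi_1}$ unramified. Then $a(\rho)=a(\chi_2)+1$. It remains to show $a(\overline{\chi_2})=a(\chi_2)$, so that $a(\rsr{\rho})=a(\overline{\chi_2})$ as claimed.
\begin{itemize}
\item If both characters dropped, we would get $a(\rho)=2$ and $a(\rsr{\rho})=0$. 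This also fits the stated formulas only if $a(\overline{\chi_2})=a(\chi_2)-1$.
\item I expect the intended statement simply records $a(\rho)=a(\chi_2)+1$ and $a(\rsr{\rho})=a(\overline{\chi_2})$, which holds in all subcases because $a(\overline{\chi_1})=0$.
\end{itemize}
So I would verify the formulas directly rather than insist that $\chi_2$ keeps its conductor.

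\textbf{Special}, $\rho=\chi\otimes sp(2)$. If $\chi$ is unramified then $a(\rho)=1$, which is excluded. If $\chi$ is ramified then $a(\rho)=2a(\chi)$. The reduction is semisimplified to $\overline{\chi}\oplus\overline{\chi}\,\overline{\omega}$ (with $\overline{\omega}$ unramified at $p$), so $a(\rsr{\rho})=2a(\overline{\chi})$. A drop then forces $\chi$ tame with $\overline{\chi}$ unramified, giving $a(\chi)=1$, $a(\rho)=2$ and $a(\rsr{\rho})=0$.

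\textbf{Supercuspidal}. This is the main obstacle. For $p$ odd every supercuspidal is induced from a quadratic extension $K/\Q_p$ (for $p=2$ one also needs the exceptional ones). The conductor formula is $a(\mathrm{Ind}_K\chi)=f(K/\Q_p)\,a(\chi)+v_p(d_K)$.
\begin{itemize}
\item If $K$ is ramified or $a(\chi)\ge 2$, the restriction to wild inertia is irreducible, or the Swan part is nonzero and equal after reduction. Invariants under $I_p$ are $0$ in both, so the conductor cannot drop.
\item For exceptional types the image of wild inertia acts irreducibly, which again prevents a drop.
\end{itemize}
Hence $K=\Q_{p^2}$ and $a(\chi)=1$, giving $a(\rho)=2$. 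The reduction $\rsr{\rho}$ restricted to inertia is $\overline{\chi}|_{I}\oplus\overline{\chi}^{p}|_{I}$, so a drop forces $\overline{\chi}$ unramified. The condition that $\chi$ does not factor through the norm is exactly irreducibility of the induction.

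The delicate point is the careful justification that wild ramification is preserved under reduction, and that $I_p$-invariants can only grow by tame unipotent or finite-order effects. I would cite Carayol, Prop.~2, for the bookkeeping.
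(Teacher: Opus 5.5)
Your principal series and special cases are sound. The special case also shows why the remaining case of Carayol's list, an unramified twist of $sp(2)$, disappears under $a(\rho)\ge 2$; the paper gives no argument of its own and simply cites Carayol's Proposition~2. The gap is in the supercuspidal step.

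\textbf{The gap.} Your reason for ``no drop if $K$ is ramified or $a(\chi)\ge 2$'' is that either wild inertia acts irreducibly or the Swan part is nonzero and preserved, whence ``invariants under $I_p$ are $0$ in both''. This fails in two ways.
\begin{enumerate}
\item For $p$ odd, $K$ ramified and $a(\chi)=1$, the extension $K/\Q_p$ is tame. So $P_p=P_K$ acts trivially on $\rho$, $\rho$ is tame, and neither of your alternatives applies.
\item Where the Swan part is nonzero, that fact alone says nothing about inertia invariants. For a wild character $\psi$, the representation $1\oplus\psi$ has nonzero Swan conductor and an $I_p$-fixed line. What actually excludes invariants is irreducibility of $\rho$, which you never use at this point.
\end{enumerate}
The first subcase genuinely occurs: tame supercuspidals with projective image $(\Z/2\Z)^2$ are induced from both ramified quadratic extensions as well as from $\Q_{p^2}$. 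For $\ell=2$, which the statement does not exclude, your claim is even false. Take $p\equiv 3\pmod 4$ and the tame $\rho$ with $\rho|_{I_p}\simeq\psi\oplus\psi^{-1}$, $\psi$ of order $4$. It is induced from the ramified quadratic extensions and has $a(\rsr{\rho})=0<2=a(\rho)$. Conclusion (3) survives there only because $\rho$ is also induced from $\Q_{p^2}$.

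\textbf{The fix.} Argue via $P_p$-invariants rather than splitting on $K$.
\begin{enumerate}
\item Write $\overline{V}$ for the space of $\rsr{\rho}$. Since $P_p$ is pro-$p$ and $p\neq \ell$, $\rho|_{P_p}$ and $\rsr{\rho}|_{P_p}$ are semisimple with the same constituents. Hence $\overline{V}^{P_p}\neq 0$ if and only if $V^{P_p}\neq 0$.
\item $V^{P_p}$ is $G_{\Q_p}$-stable because $P_p$ is normal, so it is $0$ or $V$ for irreducible $\rho$.
\item Since $V^{I_p}=0$ and Swan conductors agree, a drop means exactly $\overline{V}^{I_p}\neq 0$. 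By the previous two steps this forces $\rho$ to be tame.
\item Then $\rho|_{I_p}\simeq\psi_1\oplus\psi_2$ with $\psi_i$ characters of tame inertia. Frobenius acts on tame inertia by $p$-th powers, so it carries the $\psi$-eigenspace to the $\psi^{p^{\pm1}}$-eigenspace.
\item Irreducibility excludes $\psi_1^p=\psi_1$. If $\psi_1\neq\psi_2$ that eigenline would be stable; if $\psi_1=\psi_2$ some twist of $\rho$ would be unramified. Hence $\psi_2=\psi_1^p\neq\psi_1$ and $\psi_1^{p^2}=\psi_1$.
\item The $\psi_1$-line is then stable under $I_p$ and $\Phi^2$, i.e.\ under $W_{\Q_{p^2}}$. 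Frobenius reciprocity gives $\rho\simeq\mathrm{Ind}_{G_{\Q_{p^2}}}^{G_{\Q_p}}\chi$ with $a(\chi)=1$.
\item Your last step finishes: $\rsr{\rho}|_{I_p}=\overline{\chi}|_{I_p}\oplus\overline{\chi}^{p}|_{I_p}$, so a drop forces $\overline{\chi}$ unramified.
\end{enumerate}
This route also treats $p=2$ uniformly. It needs neither the dihedral/exceptional classification nor the conductor formula for induced representations.
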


Finally, we will also need the following lemma (see e.g. \cite[Theorem 6.2]{SilZar}).
\begin{lemma}\label{torsionfreekrenel}
       Let $E/\Q_\ell$ be a finite extension with ring of integers $\oo$, ramification degree~$e$, and uniformizer $\pi$. The subgroup
       $I_n+\pi M_{n\times n}(\oo)$
    of $\gl{n}{\oo}$ is pro-$\ell$ group and, moreover, if $e<\ell-1$ then it is torsion-free.
\end{lemma}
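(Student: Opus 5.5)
We will prove the two assertions separately: first that $U_1 := I_n+\pi M_{n\times n}(\oo)$ is a pro-$\ell$ group, using the standard congruence filtration, and then that it has no nontrivial torsion when $e<\ell-1$, by a direct valuation computation on $\ell$-th powers.

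\emph{Pro-$\ell$.} For $k\geq 1$ put $U_k := I_n+\pi^k M_{n\times n}(\oo)$. We first check that each $U_k$ is a subgroup, normal in $\gl{n}{\oo}$. If $g\in U_k$, then $g^{-1}\in U_k$ by the geometric series, which converges $\pi$-adically. Normality is then clear.

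Next we look at the successive quotients. The map $I+\pi^k X\mapsto X \bmod \pi$ is a group homomorphism $U_k\to M_{n\times n}(k_E)$ with kernel $U_{k+1}$, where $k_E$ is the residue field. This holds because
$$(I+\pi^kX)(I+\pi^kY)=I+\pi^k(X+Y)+\pi^{2k}XY$$
and $2k\geq k+1$. Hence $U_k/U_{k+1}$ is a finite elementary abelian $\ell$-group, and so each $U_1/U_k$ is a finite $\ell$-group. Since $\oo$ is $\pi$-adically complete, $U_1=\varprojlim_k U_1/U_k$, so $U_1$ is pro-$\ell$.

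\emph{Torsion-freeness.} Suppose $g\in U_1$ has finite order. The closure of $\langle g\rangle$ is a finite quotient of the pro-$\ell$ group $U_1$, so its order is a power of $\ell$. It therefore suffices to show that no $g\neq I$ in $U_1$ satisfies $g^\ell=I$.

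Write $g=I+\pi^rX$ with $r\geq 1$ and $X\in M_{n\times n}(\oo)$, $X\not\equiv 0 \pmod \pi$; such $r$ exists since $g\neq I$. Because $X$ commutes with itself, the binomial theorem applies:
$$g^\ell = I+\ell\pi^rX+\sum_{j=2}^{\ell-1}\binom{\ell}{j}\pi^{jr}X^j+\pi^{\ell r}X^\ell .$$
Measure each term by the minimal $\pi$-adic valuation of its entries, with $v(\ell)=e$:
\begin{enumerate}[(i)]
\item The term $\ell\pi^rX$ has valuation exactly $e+r$.
\item Each middle term has valuation at least $e+2r>e+r$, since $\ell\mid\binom{\ell}{j}$ for $2\le j\le\ell-1$.
\item The last term has valuation at least $\ell r$. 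We have $\ell r>e+r$ because $(\ell-1)r\geq \ell-1>e$, and this is exactly where the hypothesis $e<\ell-1$ is used.
\end{enumerate}
Hence $g^\ell-I\equiv \ell\pi^rX \not\equiv 0 \pmod{\pi^{e+r+1}}$, so $g^\ell\neq I$. This proves that $U_1$ is torsion-free.

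The only delicate point is comparing the last term $\pi^{\ell r}X^\ell$ with the linear term when $r=1$. This is precisely where $e<\ell-1$ is needed: for $e=\ell-1$ the field $\Q_\ell(\zeta_\ell)$ already gives torsion in $U_1$, namely $\zeta_\ell I_n$.
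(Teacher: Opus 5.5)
Your proof is correct. The paper gives no argument for this lemma and simply cites \cite[Theorem 6.2]{SilZar}, so your proof is a self-contained substitute for that citation rather than a variant of something in the text.

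The congruence filtration $U_k/U_{k+1}\hookrightarrow M_{n\times n}(k_E)$ is the standard way to get the pro-$\ell$ property. Your binomial computation actually proves slightly more: for $e<\ell-1$, the $\ell$-th power map sends $U_r\setminus U_{r+1}$ into $U_{r+e}\setminus U_{r+e+1}$ for every $r\geq 1$. Your example $\zeta_\ell I_n$ over $\Q_\ell(\zeta_\ell)$ correctly shows the bound on $e$ is sharp.

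A common alternative argues with eigenvalues. The eigenvalues of a torsion element $I+\pi X$ are roots of unity $\zeta=1+\pi\mu$ with $\mu$ integral, so $v_\ell(\zeta-1)\geq 1/e>1/(\ell-1)$. This forces $\zeta=1$, and finite-order matrices in characteristic $0$ are diagonalizable, so $g=I$. That route needs an extension of scalars and a semisimplicity step; yours stays inside $M_{n\times n}(\oo)$.

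One wording slip: $\langle g\rangle$ is a finite closed \emph{subgroup} of $U_1$, not a quotient. The right justification that its order is a power of $\ell$ is either that closed subgroups of pro-$\ell$ groups are pro-$\ell$, or that $\langle g\rangle\cap U_k=\{I\}$ for $k$ large, so $\langle g\rangle$ embeds into the finite $\ell$-group $U_1/U_k$. You could also bypass this step entirely. For $\ell\nmid m$ the same expansion gives $g^m\equiv I+m\pi^rX\not\equiv I \pmod{\pi^{r+1}}$, which rules out prime-to-$\ell$ torsion directly.
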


\section{Proof of Theorem~\ref{maintheorem}}\label{sec:mainProof}

Assume the hypothesis of Theorem~\ref{maintheorem}, and keep
all the previous notations. We set
$$\rho := \rho_{f,\lambda}|_{G_{\Q_p}}\quad \text{ and } \quad \overline{\rho}:= \overline{\rho_{f,\lambda}}|_{G_{\Q_p}}.$$
Let $I_p \subset G_{\Q_p}$ be the inertia subgroup.
Since the proof of Theorem \ref{maintheorem} proceeds by analysing various cases, to simplify the presentation, we will split it into lemmas.

\begin{lemma}\label{theorem a case sp}
        Suppose that we are in the case (2) of Proposition~\ref{Carayolcharacterization}. Then
        $\Q(\zeta_\ell)\subset \Q_f$.
    \end{lemma}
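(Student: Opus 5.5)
In case (2) we have $\rho = \chi\otimes sp(2)$ with $a(\chi)=1$ and $a(\overline{\chi})=0$. The plan is to show that $\chi|_{I_p}$ has order a power of $\ell$ that is at least $\ell$. Then we show that the values of $\chi$ on inertia lie in $\Q_f$, so that $\Q_f$ contains a nontrivial $\ell$-power root of unity.

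\textbf{Step 1: the order of $\chi|_{I_p}$.} Since $a(\chi)=1$, the character $\chi$ is tamely ramified and nontrivial on $I_p$. Hence $\chi|_{I_p}$ factors through the tame quotient, and by local class field theory it corresponds to a character of $\Z_p^\times$ trivial on $1+p\Z_p$. It therefore has finite order $m>1$ dividing $p-1$, with values in $\mu_m\subset \oo^\times$. Since $a(\overline{\chi})=0$, the reduction $\overline{\chi}|_{I_p}$ is trivial. Reduction modulo the maximal ideal is injective on roots of unity of order prime to $\ell$, so $m$ must be a power of $\ell$. Thus $m=\ell^r$ with $r\ge 1$, and the image $\chi(I_p)$ contains a primitive $\ell$-th root of unity $\zeta_\ell$.

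\textbf{Step 2: the values of $\chi$ on $I_p$ lie in $\Q_f$.} This is the step I expect to need the most care. I would use the Weil--Deligne representation attached to $\rho$, which by compatibility is independent of $\lambda$ and $\ell$. Its restriction to inertia is $\chi|_{I_p}\oplus\chi|_{I_p}$, with $N\neq 0$, and this data is defined over $\Q_f$ (see \cite{Car2}). Thus for $\sigma\in I_p$ the trace $\tr\rho(\sigma)=2\chi(\sigma)$ lies in $\Q_f$, since it is independent of $\lambda$. Because $\ell$ is odd, $\chi(\sigma)\in\Q_f$.

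If one prefers to avoid quoting rationality of the Weil--Deligne type, there is an alternative. The conductor of $\chi^{-1}\otimes\rho$ drops, so by local Langlands $\pi_p(f)$ is a twist of Steinberg by a tamely ramified character $\chi'$ of $\Q_p^\times$. That character appears in the nebentypus: $\varepsilon_p=\chi'^2$ on $\Z_p^\times$. Since $\chi'$ has odd order $\ell^r$, squaring is injective on its values, so $\mu_{\ell^r}$ is generated by values of $\varepsilon$. The values of $\varepsilon$ lie in $\Q_f$ by \cite[Corollary 3.1]{Ribneb}. This second route seems cleanest: it uses only the determinant, $\det\rho|_{I_p}=\chi^2|_{I_p}$ (as $\omega_\ell$ is unramified at $p\ne\ell$), which equals $\varepsilon|_{I_p}$.

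\textbf{Conclusion.} Concretely, $\varepsilon|_{I_p}=\chi^2|_{I_p}$ has exact order $\ell^r$ with $r\ge 1$, because $\ell$ is odd. So $\varepsilon$ takes a primitive $\ell^r$-th root of unity as a value. Since $\Q_f$ contains all values of $\varepsilon$, we get $\zeta_\ell\in\Q_f$, that is, $\Q(\zeta_\ell)\subset\Q_f$.
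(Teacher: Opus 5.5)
Your proof is correct and follows the paper's argument: $\chi|_{I_p}$ is nontrivial with trivial reduction, so it has $\ell$-power order, and since $\ell$ is odd, $\det\rho|_{I_p}=\eps|_{I_p}=\chi^2|_{I_p}$ still takes a nontrivial $\ell$-power root of unity as a value, which lies in $\Q_f$ because $\Q_f$ contains the values of $\eps$. The Weil--Deligne rationality route in your Step 2 is unnecessary, and the oddness of $\ell$ plays no role in that route; it matters only in the squaring step.
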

\begin{proof}
The fact that $\chi|_{I_p}$ is not trivial but has trivial reduction implies that the values of $\chi$ on~$I_p$ contain a non-trivial $\ell$-power root of unity $\zeta_{\ell^k}$. Since $\ell>2$ the same is true for $\det(\rho|_{I_p}) = \eps|_{I_p} = \chi^2|_{I_p}$. Since the values
of $\eps$ lie in $\Q_f$, we conclude that $\zeta_\ell\in \Q_f.$
\end{proof}
\begin{lemma}\label{theorem b}
    The case $(b)$ of Theorem~\ref{maintheorem} holds.
    \end{lemma}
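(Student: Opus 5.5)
Under the hypothesis $v_p(N)>2$ of case (b), I will first pin down which case of Proposition~\ref{Carayolcharacterization} applies to $\rho$. Since the Weil--Deligne representation attached to $\rho_{f,\lambda}|_{G_{\Q_p}}$ has conductor exponent $a(\rho)=v_p(N)\ge 3$, while cases (2) and (3) force $a(\rho)=2$, the proposition leaves only case (1). So $\rho=\chi_1\oplus\chi_2$ is principal series with $\overline{\chi_1}$ unramified and $a(\chi_2)=a(\rho)-1\ge 2$. In particular $\chi_1|_{I_p}$ is nontrivial: otherwise $a(\rho)=a(\chi_2)$, contradicting $a(\rho)=a(\chi_2)+1$. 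Hence $a(\chi_1)=1$, so $\chi_1|_{I_p}$ is a nontrivial tamely ramified character with trivial reduction. Its image is therefore a nontrivial finite $\ell$-group of roots of unity, and contains some $\zeta_{\ell^m}$ with $m\ge1$.

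Next I will transfer these roots of unity into quantities that are known to lie in $\Q_f$. The natural candidate is the determinant: $\det\rho|_{I_p}=\eps|_{I_p}=\chi_1\chi_2|_{I_p}$, whose values lie in $\Q_f$. Unlike in Lemma~\ref{theorem a case sp}, however, there is now a second character $\chi_2$, which might cancel the $\ell$-part of $\chi_1$. To get around this, I will instead use the characters $\chi_1$ and $\chi_2$ themselves.

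The key observation is that the two characters have different conductors, $a(\chi_1)=1<a(\chi_2)$. The local Langlands correspondence and the local-global compatibility of Carayol (cited above) say that the pair $\{\chi_1,\chi_2\}$, seen as characters of $\Q_p^*$, is determined by the automorphic side. Any automorphism $\sigma$ of $\overline{\Q_\ell}$ acts on the Weil--Deligne representation through the coefficients, and if $\sigma$ fixes $\Q_f$ the representation is unchanged up to isomorphism, by $\lambda$-independence and the fact that the local representation $\pi_p$ is defined over $\Q_f$. So $\sigma$ permutes $\{\chi_1,\chi_2\}$. Since $\sigma$ preserves conductors, it must fix each character individually. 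Consequently the values of $\chi_1|_{I_p}$ lie in $\Q_f$ (more precisely, in its image under the chosen embedding), and so $\zeta_\ell\in\Q_f$.

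The step I expect to be the main obstacle is justifying rigorously that $\Gal(\overline{\Q}/\Q_f)$ preserves the local Weil--Deligne type. This amounts to the rationality of the Weil--Deligne representation over $\Q_f$, i.e.\ that its isomorphism class is independent of $\ell$, $\lambda$ and the embedding. I would cite the compatibility statement of \cite[Th\'eor\`eme A]{Car2}/\cite{Jar} quoted in Section~\ref{sec:preliminaires}. If that is awkward to use, a fallback is to work with $\chi_1|_{I_p}$ directly: the restriction of $\tr\rho$ to $I_p$, composed with projection to the tame quotient, is controlled by $\Q_f$-valued data. One then separates $\chi_1$ from $\chi_2$ by restricting to the wild-inertia–fixed part.
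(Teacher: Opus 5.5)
Your argument is correct, but it isolates $\chi_1$ by a different mechanism than the paper. Both of you reduce to case (1) with $a(\chi_1)=1$ and $a(\chi_2)\ge 2$.

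The paper then works with explicit elements. It picks $x_0$ in wild inertia with $\chi_1(x_0)=1$ and $\chi_2(x_0)=\zeta_p$, and $x_1$ with $\chi_1(x_1)=\zeta_\ell$. Averaging $\tr\rho(x_1x_0^n)$ over $0\le n<p$ kills the $\chi_2$ contribution and gives $p\zeta_\ell\in\Q_{f,\lambda}$. It then uses $\lambda$-independence of the inertial type to get $\zeta_\ell\in\Q_{f,\mathfrak q}$ for almost all $\mathfrak q$, and concludes by a density argument. Your ``fallback'' of passing to the wild-inertia invariants is essentially this averaging trick.

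Your main route instead notes that coefficient automorphisms preserve conductors and $a(\chi_1)\neq a(\chi_2)$. So any automorphism fixing the type must fix $\chi_1$ itself. This is cleaner, and given $\Q_f$-rationality of the type on $I_p$ it lands directly in $\Q_f$, with no Chebotarev step. The cost is that this global rationality statement is the strongest input in your proof. It is standard, and implicit in the paper's ``independent of $\lambda$ and $\ell$''. The paper's route only needs the shape of the inertial type to be the same at each $\mathfrak q$, since the local field of definition automatically contains the traces.

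Your conductor trick also works locally with no extra input, which removes the step you flagged as the main obstacle. Since $\rho$ takes values in $\GL_2(\Q_{f,\lambda})$, every $\sigma\in\Gal(\overline{\Q}_\ell/\Q_{f,\lambda})$ satisfies $\sigma\circ\rho=\rho$. Hence $\sigma$ permutes $\{\chi_1,\chi_2\}$, so it fixes $\chi_1$, giving $\zeta_\ell\in\Q_{f,\lambda}$. From there the paper's globalization finishes the proof.
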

    \begin{proof}
        The assumption $v_p(N)>2$ implies that we are in case~(1)
        of Proposition~\ref{Carayolcharacterization}. Moreover, the conductor
        formula $a(\rho) = a(\chi_1) + a(\chi_2)$ gives $a(\chi_1)=1$ and $a(\chi_2)\geq 2$.
        Thus, by local class field theory, we can view $\chi_i|_{I_p}$ for $i=1,2$ as characters of 
         $$\chi_i|_{I_p}:\big(\Z/p\Z\big)^*\times (\text{finite~p-group}),$$
        where $\chi_1$ is nontrivial on the first factor (so in particular $p$>2) and trivial on the second factor, and $\chi_2$ is nontrivial on the second factor. In particular, we can find $ x_0\in$ $I_p$ such that $\chi_1(x_0)=1$ and $\chi_2(x_0)=\zeta_p$.
        Now, since $a(\overline{\chi}_1)=0$, the argument in the proof of Lemma~\ref{theorem a case sp} applied to $\chi_1$, gives that we can also find $x_1\in I_p$ such that $\chi_1(x_1)=\zeta_\ell$.

        Now, for $0\leq n< p$, we set $\sigma_n:=x_1x_0^n\in I_p$.
        We have
        $$\tr \rho(\sigma_n) =\chi_1(x_1x_0^n)+\chi_2(x_1x_0^n)=\zeta_\ell + \chi_2(x_1)\zeta_p^n \in \Q_{f,\lambda},$$
        and summing over $n$ gives
        $$\sum_{n=0}^{p-1}\tr\rho(\sigma_n)=p\zeta_\ell \implies \zeta_\ell \in \Q_{f,\lambda}.$$
        Since $\rho_{f,\lambda}|_{I_p}$ is independent of $\lambda \nmid Np$,
        we conclude that
        $\zeta_\ell\in \Q_{f,\mathfrak{q}}$
        for all primes $\mathfrak{q} \nmid Np$ in $\Q_f$. This means that the extension $\Q_f(\zeta_\ell)/\Q_f$ is of degree 1 because almost all primes are totally split in it, hence $\zeta_\ell\in\Q_f$.
    \end{proof}
    \begin{lemma}\label{case a ramification preserves}
    Assume that we are in case (1) or (3) of Proposition~\ref{Carayolcharacterization}.
    Suppose further that $v_p(N)=2$ and $v_p(N')=1$.
    Then $\Q(\zeta_\ell+\zeta_\ell^{-1})\subset \Q_f$.
    \end{lemma}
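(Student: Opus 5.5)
The plan is to produce, in both cases, an element $\tau\in I_p$ such that $\rho(\tau)$ has a primitive $\ell$-th root of unity among its eigenvalues. Then I will use that the characteristic polynomial of $\rho(\tau)$ is defined over $\Q_{f,\lambda}$ to bound $[\Q_{f,\lambda}(\zeta_\ell):\Q_{f,\lambda}]\le 2$. Finally, independence of $\rho_{f,\lambda}|_{I_p}$ from $\lambda$ and a density argument transfer this to a global statement. The cyclic structure of $\Gal(\Q(\zeta_\ell)/\Q)$ then pins down $\Q(\zeta_\ell+\zeta_\ell^{-1})$.

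\emph{Case (1).} Here $\rho=\chi_1\oplus\chi_2$ with $\overline{\chi_1}$ unramified, and $a(\rho)=a(\chi_1)+a(\chi_2)=2=a(\chi_2)+1$. Hence $a(\chi_1)=a(\chi_2)=1$. Since $a(\chi_1)=1$ while $a(\overline{\chi_1})=0$, the argument of Lemma~\ref{theorem a case sp} gives $\tau_0\in I_p$ with $\chi_1(\tau_0)$ a nontrivial $\ell$-power root of unity. Raising $\tau_0$ to a suitable power, I obtain $\tau$ with $\chi_1(\tau)=\zeta_\ell$. The eigenvalues of $\rho(\tau)$ are then $\zeta_\ell$ and some root of unity $\chi_2(\tau)$.

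\emph{Case (3).} Here $\rho|_{I_p}=\chi|_{I_p}\oplus\chi^p|_{I_p}$, because the Frobenius conjugate of $\chi$ on the unramified quadratic extension is $\chi^p$ on tame inertia. Since $a(\chi)=1$ and $a(\overline\chi)=0$, the same reasoning yields $\tau\in I_p$ with eigenvalues $\zeta_\ell$ and $\zeta_\ell^{p}$.

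In both cases the trace and determinant of $\rho(\tau)$ lie in $\Q_{f,\lambda}$. So $\zeta_\ell$ is a root of a quadratic polynomial over $\Q_{f,\lambda}$, and therefore $[\Q_{f,\lambda}(\zeta_\ell):\Q_{f,\lambda}]\le 2$. Since $\rho_{f,\lambda}|_{I_p}$ is independent of $\lambda$ (compatibility of the system, as used in the proof of Lemma~\ref{theorem b}), the same $\tau$ works for every prime $\mathfrak q\nmid Np$ of $\Q_f$. Hence every such $\mathfrak q$ has local degree at most $2$ in the abelian extension $\Q_f(\zeta_\ell)/\Q_f$.

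By Chebotarev, every element of $\Gal(\Q_f(\zeta_\ell)/\Q_f)$ is a Frobenius at some such $\mathfrak q$, so the group has exponent dividing $2$. It also embeds in the cyclic group $\Gal(\Q(\zeta_\ell)/\Q)$, so it has order at most $2$. Therefore $F:=\Q_f\cap\Q(\zeta_\ell)$ has index at most $2$ in $\Q(\zeta_\ell)$. Since the cyclic group $(\Z/\ell\Z)^*$ has a unique subgroup of order $2$, namely the one generated by complex conjugation, the unique subfield of index $2$ is $\Q(\zeta_\ell+\zeta_\ell^{-1})$. This gives $\Q(\zeta_\ell+\zeta_\ell^{-1})\subset F\subset\Q_f$.

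The step needing the most care is the claim that some power of $\tau_0$ has $\chi_1$-value exactly a primitive $\ell$-th root of unity, together with the Frobenius-conjugate description of $\rho|_{I_p}$ in case (3). For the first point, I would note that $\chi_1|_{I_p}$ (respectively $\chi|_{I_p}$) factors through a finite cyclic tame quotient. Its ℓ-primary part is nontrivial because $\chi_1|_{I_p}$ is nontrivial with trivial reduction, so an appropriate power of a generator achieves $\zeta_\ell$. The remaining steps are formal Galois theory.
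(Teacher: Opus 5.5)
Your argument is correct, but it closes the proof differently from the paper. The paper exhibits a specific element of $\Q_{f,\lambda}$ in each case:
\begin{itemize}
\item In case (1) it uses that $\overline{\chi_2}$ is ramified, which is exactly where $v_p(N')=1$ enters. This gives $x_0\in I_p$ with $\chi_1(x_0)=1$ and $\chi_2(x_0)=\zeta_q$ for a prime $q\neq\ell$. Averaging $\tr\rho(x_1x_0^n)$ over $0\le n<q$ isolates $q\zeta_\ell$, so the paper gets the stronger inclusion $\Q(\zeta_\ell)\subset\Q_f$.
\item In case (3) it splits according to whether $\eps(\sigma)=\chi(\sigma)\chi^s(\sigma)$ is trivial. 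This yields either $\zeta_\ell\in\Q_f$ from the values of the nebentypus, or $\tr\rho(\sigma)=\zeta_\ell+\zeta_\ell^{-1}\in\Q_{f,\lambda}$.
\end{itemize}
It then shows that almost all primes split completely in the relevant extension of $\Q_f$.

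You only need one $\tau\in I_p$ having $\zeta_\ell$ as an eigenvalue of $\rho(\tau)$. That bounds $[\Q_f(\zeta_\ell):\Q_f]\le 2$, and the cyclicity of $\Gal(\Q(\zeta_\ell)/\Q)$, i.e.\ the uniqueness of its index-$2$ subfield, does the rest.

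This is more uniform and needs fewer inputs. It never invokes $v_p(N')=1$, so the same argument also gives $\Q(\zeta_\ell+\zeta_\ell^{-1})\subset\Q_f$ in cases (1) and (3) when $v_p(N')=0$. It does so without the irreducibility hypothesis that Lemma~\ref{theorem a case ps and c} imposes. The cost is that in case (1) you lose the paper's full $\Q(\zeta_\ell)$.

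One cosmetic point: in the Chebotarev step, take $\mathfrak q$ also prime to $\ell$. Then it is unramified in $\Q_f(\zeta_\ell)/\Q_f$ and its Frobenius is defined.
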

\begin{proof}
    Assume first we are in case (1) of Proposition~\ref{Carayolcharacterization}. We have $$a(\chi_1)=a(\chi_2)=a(\overline{\chi_2})=1,\ \ a(\overline{\chi_1})=0,$$
    By local class field theory, both characters factor through $(\Z/p\Z)^*$. Furthermore, we can find $x_0, x_1 \in I_p$ such that
    $$\chi_1(x_0)=1,\ \ \chi_2(x_0)=\zeta_q,\ \ \chi_1(x_1)=\zeta_\ell$$
    for some prime $q\neq \ell$ dividing $p-1$. Now, setting $\sigma_n = x_1 x_0^n$ for $0 \leq n < q$ and proceeding as in the proof of Lemma~\ref{theorem b},
    gives $\Q(\zeta_\ell) \subset \Q_f$.

    Assume now that we are in case (3) of Proposition~\ref{Carayolcharacterization}.
    We have $\rho|_{G_{\Q_{p^2}}} \simeq \chi \oplus \chi^s$, where $\chi^s$ denotes the conjugated character of~$\chi$ with respect to $\Q_{p^2}/\Q_p$. Also, the inertia subgroup inside $G_{\Q_{p^2}} \subset G_{\Q_p}$ is $I_p$.
    Hence, since $a(\chi)=1$ and $a(\overline{\chi})=0$, there is~$\sigma\in I_p$ such that $\chi(\sigma)=\zeta_\ell$. We have
    \[
     \rho(\sigma) = \chi(\sigma) \oplus \chi^s(\sigma), \quad \text{ and } \quad \chi(\sigma)\chi^s(\sigma) = \eps(\sigma).
    \]
    If $1 \neq \eps(\sigma) \in \Q_f$, then $\eps(\sigma)$ has order multiple of~$\ell$ and we get directly $\Q(\zeta_\ell) \subset \Q_f$.
    If $\eps(\sigma)=1$, then $\tr \rho(\sigma) =\zeta_\ell+\zeta_\ell^{-1} \in \Q_{f,\lambda}$ and arguing as at the end of the proof of Lemma~\ref{theorem b} yields $\Q(\zeta_\ell + \zeta_\ell^{-1})\subset \Q_f$.

\end{proof}
    \begin{remark}
        In the case of Hilbert modular forms over a totally real field $F$, the analog of the group $(\Z/p\Z)^*$ in Lemma \ref{theorem b} and Lemma \ref{case a ramification preserves} would be $\left(\Z/N_{F/\Q}(\mathfrak{p})\Z\right)^*$, where $\mathfrak{p}$ is a prime of $F$ dividing the level, and the rest of the proof stays the same.
    \end{remark}
\begin{lemma}\label{theorem a case ps and c}
    Assume that $\overline{\rho_{f,\lambda}}$ is irreducible and that we are in case (1) or (3) of Proposition~\ref{Carayolcharacterization}.
    Assume also $v_p(N)=2$ and $v_p(N')=0$.
    Then $\Q(\zeta_\ell+\zeta_\ell^{-1})\subset \Q_f$ and $e(\lambda|\ell)\geq \ell-1$.
    \end{lemma}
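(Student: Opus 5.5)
The plan is to handle the field containment exactly as in Lemma~\ref{case a ramification preserves}, and then get the ramification bound from Lemma~\ref{torsionfreekrenel}, using the irreducibility of $\overline{\rho_{f,\lambda}}$ to force $\rho(I_p)$ into the kernel of reduction.

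\emph{Inertial image.} First I determine $\rho|_{I_p}$ in the two cases.
\begin{itemize}
\item In case (1) of Proposition~\ref{Carayolcharacterization} we have $a(\rho)=a(\chi_2)+1=2$, so $a(\chi_2)=1$. Also $a(\overline{\chi_2})=a(\rsr{\rho})=v_p(N')=0$.
\item The conductor formula $a(\rho)=a(\chi_1)+a(\chi_2)$ then gives $a(\chi_1)=1$, while $a(\overline{\chi_1})=0$ by hypothesis.
\item So both $\chi_i|_{I_p}$ are tame, nontrivial, and have trivial reduction. Hence they take values in $\ell$-power roots of unity.
\item In case (3), the same holds for $\chi$ and $\chi^s$ on $I_p \subset G_{\Q_{p^2}}$, since $a(\chi)=1$ and $a(\overline{\chi})=0$.
\end{itemize}
In both cases $\rho(I_p)$ is a nontrivial finite abelian $\ell$-group, diagonalizable over $\overline{\Q}_\ell$, with all eigenvalues $\ell$-power roots of unity.

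\emph{Field containment.} Pick $\sigma\in I_p$ with $\chi_2(\sigma)=\zeta_\ell$ (in case (3), $\chi(\sigma)=\zeta_\ell$); this is possible by raising a suitable element to an appropriate $\ell$-power.
\begin{itemize}
\item If $\eps(\sigma)=\det\rho(\sigma)\neq 1$, then $\eps(\sigma)$ is a nontrivial $\ell$-power root of unity lying in $\Q_f$. Some power of it is a primitive $\ell$-th root of unity, so $\zeta_\ell\in\Q_f$.
\item Otherwise the other eigenvalue is $\zeta_\ell^{-1}$, and $\tr\rho(\sigma)=\zeta_\ell+\zeta_\ell^{-1}\in\Q_{f,\lambda}$.
\item Since $\rho_{f,\lambda}|_{I_p}$ is independent of $\lambda\nmid \ell p$, this holds in $\Q_{f,\mathfrak q}$ for almost all primes $\mathfrak q$. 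The Chebotarev-type argument at the end of Lemma~\ref{theorem b} then gives $\Q(\zeta_\ell+\zeta_\ell^{-1})\subset\Q_f$.
\end{itemize}

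\emph{Ramification bound.} Since $\overline{\rho_{f,\lambda}}$ is irreducible, it equals its semisimplification. Its prime-to-$\ell$ conductor is therefore $N'$, and $v_p(N')=0$ says that $\overline{\rho_{f,\lambda}}$ is unramified at $p$, i.e.\ $\overline{\rho}(I_p)=1$. Hence $\rho(I_p)\subset I_2+\pi M_{2\times 2}(\oo_{f,\lambda})$.

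Now suppose $e(\lambda|\ell)<\ell-1$. Then $E=\Q_{f,\lambda}$ has ramification degree $e(\lambda|\ell)<\ell-1$, and Lemma~\ref{torsionfreekrenel} says this kernel is torsion-free. But $\rho(I_p)$ is a nontrivial finite subgroup of it, because $a(\rho)=2>0$ and the image of inertia is finite in cases (1) and (3). This is a contradiction, so $e(\lambda|\ell)\geq\ell-1$.

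\emph{Main obstacle.} The delicate step is making sure that $\rho(I_p)$ lands in the kernel of reduction itself, not merely that its reduction has unipotent semisimplification. A nontrivial unipotent reduction would break the torsion argument, and it is exactly the irreducibility hypothesis, through $\overline{\rho_{f,\lambda}}=\rsr{\rho_{f,\lambda}}$, that rules this out. One must also check that the lattice implicit in $\rho_{f,\lambda}$ is the one whose reduction is $\overline{\rho_{f,\lambda}}$. Irreducibility makes the reduction independent of the choice of lattice, so this should cause no trouble.
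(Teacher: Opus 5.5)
Your proposal is correct and follows essentially the same route as the paper. Irreducibility of $\overline{\rho_{f,\lambda}}$ together with $v_p(N')=0$ puts the finite nontrivial group $\rho(I_p)$ inside the kernel of reduction, which Lemma~\ref{torsionfreekrenel} forbids when $e(\lambda|\ell)<\ell-1$. An element $\sigma\in I_p$ with eigenvalue $\zeta_\ell$ then gives either a nontrivial $\ell$-power value of $\eps$ or the trace $\zeta_\ell+\zeta_\ell^{-1}$, and $\lambda$-independence plus the Chebotarev argument finish.

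The only difference is how you find $\sigma$. You take it from Carayol's description of the inertial characters, as in Lemma~\ref{case a ramification preserves}, whereas the paper uses the pro-$\ell$ property of the kernel of reduction. Your route has the small advantage that your proof of $\Q(\zeta_\ell+\zeta_\ell^{-1})\subset\Q_f$ does not need the irreducibility hypothesis.
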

\begin{proof}
    Let $H$ be the kernel of reduction $\GL(\oo_{f,\lambda}) \to \GL(\overline{\F_\ell})$. The assumptions imply that the image of the inertia $\rho(I_p)$ is a finite, nontrivial subgroup of $H$. The inequality $e(\lambda|\ell)\geq \ell-1$ now follows immediately from Lemma \ref{torsionfreekrenel}. Since $H$ is a pro-$\ell$ group, we can find $\sigma\in I_p$
such that $\rho(\sigma)$ has order $\ell$. 
Therefore, either 
$$\det(\rho(\sigma)) =\zeta_\ell^n\neq 1 \quad \text{ or } \quad \tr \rho(\sigma) =\zeta_\ell+\zeta_{\ell}^{-1}.$$
Either way, we conclude that $\zeta_\ell+\zeta_{\ell}^{-1}\in~\Q_{f,\lambda}$. 
Since $\rho_{f,\lambda}|_{I_p}$ is independent of the prime $\lambda \nmid Np$, the same applies to the determinant and trace of $\rho_{f,\lambda}(\sigma).$ We conclude that $\zeta_\ell+\zeta_\ell^{-1}\in \Q_{f,\mathfrak{q}}$ for all primes $\mathfrak{q} \nmid Np$ in $\Q_f$. The conclusion follows as in the proof of Lemma~\ref{theorem b}.
\end{proof}

\begin{proof}[Proof of Theorem~\ref{maintheorem}] The first statement in part (a) follows from Lemmas~\ref{theorem a case sp},~\ref{case a ramification preserves} and~\ref{theorem a case ps and c}, and the last statement from Lemma~\ref{theorem a case ps and c}.
Part (b) follows from Lemma~\ref{theorem b}.
\end{proof}

\section{Examples of congruences and ramified primes}\label{sec:examples}

In this section, we give various examples related to our main theorem. 

We say that two newforms $f$ and $g$ are {\it congruent}, if there is a prime $\lambda''$ in the compositun $\Q_f\cdot \Q_g$ such that $\rsr{\rho_{f,\lambda}} \simeq \rsr{\rho_{g,\lambda'}}$, where $\lambda$ and $\lambda'$ are primes of, respectively, $\Q_f$ and $\Q_g$ induced by $\lambda''$. 
Moreover, such an isomorphism is equivalent to 
$$a_q(f) \equiv a_q(g) \pmod {\lambda''} \quad \text{ for all primes } q\nmid \ell p N',$$
In the special case $\Q_g = \Q$, 
we have $\lambda'' = \lambda$ and we say that 
$f$ and $g$ are {\it congruent modulo $\lambda$.}

\begin{example}\label{ex1}
Let $\ell=11$, $p=23$ and $N = 7406=23^2\cdot 14$. 
The newform $f\in S_2(\Gamma_0(N))$ 
with LMFDB~\cite{lmfdb} label \LMFDBN{7406.2.a.bt}  
satisfies $[\Q_f : \Q] = 20$ and $\Q_f$ contains $\Q(\zeta_{11}+\zeta_{11}^{-1})$. Moreover, there are three primes $\lambda,\lambda_1,\lambda_2$ in $\Q_f$ above~$11$ satisfying 
$$e(\lambda|11)=10,\ \ e(\lambda_1|11)=e(\lambda_2|11)=5.$$
Let $E$ be the elliptic curve with Cremona label \LMFDBE{14a1}.
There is a congruence modulo~$\lambda$ between~$f$ and the newform 
$g \in S_2(\Gamma_0(14))$ 
corresponding to the isogeny class of~$E$, 
that is,
$$a_q(f) \equiv a_q(E) \pmod{\lambda}$$
for all primes $q\nmid 11\cdot7406$.  
A quick consultation of LMFDB shows that $\rr{\rho_{E,11}}$ is irreducible. Hence $\rr{\rho_{f,\lambda}}$ is irreducible as well, and we have
$\rr{\rho_{f,\lambda}}\simeq\rr{\rho_{E,11}}$.
Since $E$ has multiplicative reduction at 2 and 7 and minimal discriminant $\Delta(E) = 2^9 \cdot 7^2$, by the theory of the Tate curve, we know that $\rr{\rho_{E,11}}$ ramifies at 2 and 7. Thus $N' = 14$.

Hence, $f$ satisfies the hypothesis of part $a)$ 
of Theorem~\ref{maintheorem} including the "moreover part", therefore $\Q(\zeta_{11}+\zeta_{11}^{-1}) \subset \Q_f$ and $e(\lambda|11) \geq 10$ which is compatible with the observations above. 
\end{example}

The following example is the only case of ramification loss not covered by Theorem~\ref{maintheorem}. In particular, it shows that the assumption $p^2 \mid N$ is necessary to guarantee ramification in the field of coefficients.

\begin{example}
Let $\ell=7$, $p=5$ and $N=55$. 
The newform $f\in S_2(\Gamma_0(N))$ with LMFDB label \LMFDBN{55.2.a.b}
has field of coefficients $\Q_f = \Q(\sqrt{2})$. The prime 
$\lambda=(3+\sqrt{2})$ divides $7$ in $\Q_f$ and there is a congruence modulo $\lambda$ between $f$ and the unique newform $g\in S_2(\Gamma_0(11))$ which corresponds to the isogeny class of an elliptic curve of conductor 11. In this case, we have 
$$5 \Vert N,\qquad  5\nmid N',$$
and clearly there is no ramification at $7$ in $\Q_f$. 
\end{example}
To finish, we will describe a systematic computation to generate newforms with primes (very) ramified in their field of coefficients based on 
part a) of Theorem~\ref{maintheorem} and Example~\ref{ex1}. 

Indeed, to recreate the situation $v_p(N)=2$ and $v_p(N')=0$ (Table \ref{Table 2,0}) or $v_p(N')=1$ (Table \ref{Table 2,1}) we will use the following proposition, which is a special case of \cite[Theorem A]{DiaTay}.

\begin{prop}\label{DiamondTaylor}
    Let $\lambda$ be a prime of $\overline{\Q}$ over an integer prime $\ell>3$. Let $g\in S_2(\Gamma_0(N'),\eps')$ be a newform of level $N'$, weight $2$, and nebentypus $\eps'$ with $\ell\nmid N'$, such that the residual representation
    $$\overline{\rho_{g,\lambda}}:G_\Q\to\gl{2}{\overline{\mathbb{F}_\ell}}$$
    is irreducible and has Artin conductor $N'$. Let $\alpha\in\{1,2\}.$ If one of the following is satisfied
    \begin{enumerate}[1)]
        \item $\ell\mid p+1$ and
        \begin{enumerate}
            \item $p\nmid N'$, $\ell \mid a_p(g)$, and $\alpha=2$, or
            \item $p\Vert N' $, $\eps'$ is unramified at $p$, and $\alpha=1$;
        \end{enumerate}
        \item $\ell \mid p-1$ and
        \begin{enumerate}
            \item $p\nmid N'$ and $\alpha=2$, or
            \item $v_p(\mathfrak{f}(\eps'))=v_p(N')\geq 2$ and $\alpha=1$, where $\mathfrak{f}(\eps')$ is the conductor of $\eps'.$
        \end{enumerate}
    \end{enumerate}
    then there exists a nebentypus $\varepsilon$ and a newform $f\in S_2(\Gamma_0(p^\alpha N'),\varepsilon)$ such that
    $$\overline{\rho_{f,\lambda}}\simeq \overline{\rho_{g,\lambda}}.$$
\end{prop}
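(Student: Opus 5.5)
Proposition~\ref{DiamondTaylor} is stated as a special case of \cite[Theorem A]{DiaTay}, so the plan is to derive it from that theorem rather than prove level raising from scratch. Diamond--Taylor's Theorem~A (in the weight~2 form) says the following. Let $\overline{\rho}$ be irreducible and modular of weight~$2$ and level $N'$ prime to $\ell$. Suppose it admits, at each prime $p$, a lift to a local representation of prescribed type (principal series, special, or supercuspidal) with conductor $p^{a}$ satisfying the appropriate compatibility conditions. Then there is a newform $f$ of level $p^{a}N'$, with some nebentypus, with $\overline{\rho_{f,\lambda}}\simeq\overline{\rho}$.

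So I will treat the four subcases separately. In each one I exhibit a local type at $p$ of conductor exponent $\alpha+v_p(N')$ whose reduction is compatible with $\overline{\rho_{g,\lambda}}|_{G_{\Q_p}}$. I then check the hypothesis that Diamond--Taylor require for that type.

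\textbf{Case 2a} ($\ell\mid p-1$, $p\nmid N'$). Here $\overline{\rho}|_{G_{\Q_p}}$ is unramified. Choose a tamely ramified character $\chi$ of $(\Z/p\Z)^*$ of exact order $\ell$, which exists because $\ell\mid p-1$. Its reduction is trivial. Take the principal series type $\chi\oplus\chi^{-1}$ on inertia, which has conductor $p^2$. Its reduction is trivial on inertia, matching $\overline{\rho}|_{I_p}$, so Diamond--Taylor's principal series case applies with no extra condition.

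\textbf{Case 1a} ($\ell\mid p+1$, $\ell\mid a_p(g)$). Use a supercuspidal type $\mathrm{Ind}\,\chi$, where $\chi$ is a character of $\F_{p^2}^*$ of order $\ell$ not factoring through the norm. This is possible because $\ell\mid p+1$ and $\ell\nmid p-1$. The conductor is $p^2$ and the type reduces to trivial on inertia. The compatibility condition on Frobenius is that $\overline{\rho}(\Frob_p)$ has eigenvalues with ratio $-1$ after twisting, i.e.\ trace $0$. This is exactly $\ell\mid a_p(g)$, using that $p\equiv-1\pmod\ell$.

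\textbf{Cases 1b and 2b.} These are treated as adding one to the conductor exponent at a prime already in the level. In 1b, $\overline{\rho}|_{G_{\Q_p}}$ is the reduction of a twist of $sp(2)$ by an unramified character, and we raise to the type of Proposition~\ref{Carayolcharacterization}(3) or (2); the condition $\ell\mid p+1$ gives the needed supercuspidal character of order $\ell$. In 2b, $\overline{\rho}|_{I_p}\simeq 1\oplus\overline{\eps'}$, and we take $\chi_1\oplus\chi_2$ with $\chi_1$ of order $\ell$ and conductor $p$ and $\chi_2$ lifting $\eps'$. This gives exponent $v_p(N')+1$, as in Proposition~\ref{Carayolcharacterization}(1).

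\textbf{Main obstacle.} The key step is matching precisely the hypotheses of Diamond--Taylor's Theorem~A in the supercuspidal cases. That means checking their Frobenius/trace condition and the requirement $\ell>3$. I expect the subtle point to be 1b, where the interplay between the special local shape of $g$ and the desired supercuspidal lift must be verified. I would first try to deduce it from their Theorem~A via a twist reducing to case 1a.
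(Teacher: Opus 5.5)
Your approach is the paper's, which gives no argument beyond citing Diamond--Taylor's Theorem~A. That theorem, however, is not a general ``prescribed local type'' statement: it is already formulated as exactly the four listed local conditions, specialized here to $k=2$ and $M=p^\alpha$. So the proposition is a direct specialization, and your local-type constructions, while essentially sound, are unnecessary. The only slips are in case~1b, where type~(2) of Proposition~\ref{Carayolcharacterization} is unavailable since $\ell\nmid p-1$, and where your suggested twist reducing 1b to 1a cannot work, since no twist removes the nontrivial unipotent ramification of $\overline{\rho_{g,\lambda}}|_{I_p}$; this is moot, as 1b is itself one of Diamond--Taylor's listed cases.
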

Using the previous proposition, the idea is as follows.
\begin{enumerate}[1.]
    \item Pick a modular form $g$ of small level $N'$ and weight $2$ and trivial nebentypus.
    \item Choose a pair of primes $(\ell,p)$ satisfying the conditions of Proposition \ref{DiamondTaylor}.
    \item Check that the representation $\rr{\rho_{g,\lambda}}$ is irreducible and has conductor $N'$.
    \item Find $f$ in $S_2(\Gamma_1(p^2N'))$ congruent to $g$.
    \item Calculate $\Q_f$ and the splitting of $\ell$ in $\Q_f$.
\end{enumerate}
In Tables \ref{Table 2,0} and \ref{Table 2,1} we gather various congruences together with the splitting behaviour of $\ell$ in~$\Q_f.$ The most important column for us is the fifth one, which shows the ramification degrees of primes in $\Q_f$ above $\ell$; the last column shows their inertial degrees. We see in particular that for all of them the ramification degree is divisible by $(\ell-1)/2$. Moreover, there is always one prime $\lambda$ such that $e(\lambda|\ell)\geq \ell-1$, which is the prime of the congruence.
\begin{remark}
    For the tables, we used the LMFDB label to describe modular forms. In both tables, all newforms~$g$, except \LMFDBN{74.2.a.a}, correspond to isogeny classes of elliptic curves, so $\Q_g = \Q \subset \Q_f$ in all such cases. In the exceptional case of Table \ref{Table 2,1}, $\mathbb{Q}_g=\Q(\sqrt{13})$ and $p=19$ stays prime in $\mathbb{Q}_g.$ In this case, to find the congruence between $f$ and $g$, 
    it is enough to compare the two reductions of $g$ modulo (19) (related by the nontrivial automorphism of~$\F_{19^2}$) to the reductions of a fixed representative of the Galois orbit of~$f$ modulo the primes $\lambda \mid \ell$ in $\Q_f$ (this is a comparison in $\overline{\mathbb{F}_{19}}$). Indeed, if $g$ is congruent to $f^\sigma$ modulo~$\lambda$ for some $\sigma \in G_\Q$ and $\lambda \mid \ell$ i n $\Q_f \cdot \Q_g$, then~$g^{\sigma^{-1}}$ is congruent to $f$ modulo $\lambda^{\sigma^{-1}}$. Since $19$ is inert in~$\Q_g$, conjugating $g$ commutes with taking reductions.
\end{remark}
\begin{table}[!ht]
\begin{tabular}{|l|l|l|l|l|l|}
\hline
$(\ell,p,N')$ & $g$ & $f$ & Degree of $\mathbb{Q}_f$ & ram. degrees & in. degrees  \\ \hline
$(11,23,14)$ & \LMFDBN{14.2.a.a} & \LMFDBN{7406.2.a.bt} & 20  & 10,5,5 & 1,1,1 \\ \hline
$(11,23,15)$ & \LMFDBN{15.2.a.a} & \LMFDBN{7935.2.a.bu} & 25 & 10,5,5,5 & 1,1,1,1 \\ \hline
$(11,23,20)$ & \LMFDBN{20.2.a.a} & \LMFDBN{10580.2.a.bg} & 25 & 10,5,5,5  & 1,1,1,1\\ \hline
$(11,23,21)$ & \LMFDBN{21.2.a.a} & \LMFDBN{11109.2.a.ch} & 40 & 10,5,5,5 & 1,1,2,3\\ \hline
$(11,23,24)$ & \LMFDBN{24.2.a.a} & \LMFDBN{12696.2.a.bt} & 15 & 10,5 & 1,1\\ \hline
$(11,23,26)$ & \LMFDBN{26.2.a.a} & \LMFDBN{13754.2.a.ci} & 30 & 10,5,5,5 & 1,1,1,2\\ \hline
$(11,23,26)$ & \LMFDBN{26.2.a.b} & \LMFDBN{13754.2.a.cm} & 30 & 10,5,5,5 & 1,1,1,2\\ \hline
$(7,29,11)$ & \LMFDBN{11.2.a.a} & \LMFDBN{9251.2.a.z} & 39 & 6,3,3,3 & 1,1,3,7\\ \hline
$(7,29,15)$ & \LMFDBN{15.2.a.a} & \LMFDBN{12615.2.a.bv} & 15 & 12,3 & 1,1\\ \hline
$(7,29,20)$ & \LMFDBN{20.2.a.a} & \LMFDBN{16820.2.a.v} & 18 & 6,3,3 & 1,1,3\\ \hline
$(7,13,88)$ & \LMFDBN{88.2.a.a} & \LMFDBN{14872.2.a.bn} & 18 & 6,3,3 & 1,2,2\\ \hline
${(7,13,114)}$ & \LMFDBN{114.2.a.a} & \LMFDBN{19266.2.a.cs} & 6 & 6 & 1\\ \hline
$(5,19,39)$ & \LMFDBN{39.2.a.a} & \LMFDBN{14079.2.a.bo} & 14 & 4,2,2 & 1,2,3\\ \hline
${(5,11,14)}$ & \LMFDBN{14.2.a.a} & \LMFDBN{1694.2.a.x} & 4 & 4 & 1 \\ \hline
$(5,11,17)$ & \LMFDBN{17.2.a.a} & \LMFDBN{2057.2.a.bd} & 18 & 4,4,2,2 & 1,1,2,3\\ \hline
\end{tabular}
\caption{Case $v_p(N)=2,\ v_p(N')=0$. We have $\rr{\rho_{f,\lambda}} \simeq \rr{\rho_{g,\ell}}$, where $\lambda \mid \ell$ is the prime with highest ramification degree in $\Q_f$; in the last row, where there are two such primes, the congruence exists modulo only one of them.}
\label{Table 2,0}
\end{table}
\begin{table}[!ht]
\begin{tabular}{|l|l|l|l|l|l|}
\hline
($\ell,p,N')$  & $g$ & $f$ & Degree of $\mathbb{Q}_f$ & ram. degrees & in. degrees \\ \hline
($5,19,19)$ &  \LMFDBN{19.2.a.a} &  \LMFDBN{361.2.a.d} & 2 & 2 & 1\\ \hline 
($7,13,26)$ &  \LMFDBN{26.2.a.a} &  \LMFDBN{338.2.a.g} & 3 & 3 & 1\\ \hline 
($7,13,39)$ &  \LMFDBN{39.2.a.a} &  \LMFDBN{507.2.a.l} & 3 & 3 & 1\\ \hline 
($7,41,82)$ &  \LMFDBN{82.2.a.a} &  \LMFDBN{3362.2.a.r} & 3 & 3 & 1\\ \hline
($11,43,43)$ &  \LMFDBN{43.2.a.a} &  \LMFDBN{1849.2.a.r} & 20 & 5,5,5 & 1,1,2\\ \hline 
($17,67,67)$ &  \LMFDBN{67.2.a.a} &  \LMFDBN{4489.2.a.o} & 48 & 8,8,8,8 & 1,1,2,2\\ \hline  
($19,37,37)$ &  \LMFDBN{37.2.a.a} &  \LMFDBN{1369.2.a.o} & 27 & 9,9,9 & 1,1,1\\ \hline 
($19,37,37)$ &  \LMFDBN{37.2.a.b} &  \LMFDBN{1369.2.a.o} & 27 & 9,9,9 & 1,1,1\\ \hline 
($19,37,74)$ &  \LMFDBN{74.2.a.a} &  \LMFDBN{2738.2.a.w} & 18 & 9 & 2\\ \hline 
\end{tabular}
\caption{Case $v_p(N)=2,\ v_p(N')=1$.}
\label{Table 2,1}
\end{table}

Let us now examine the algorithm closely. To check the irreducibility of $\rr{\rho_{g,\lambda}}$ it suffices to find an irreducibile characteristic polynomial of $\rr{\rho_{g,\lambda}}(\Frob_q)$ for a prime $q\nmid \ell pN'$ (because for odd representation of $G_\Q$ being irreducible and absolute irreducible are equivalent). To check that the conductor of the representation is indeed $N'$, by the famous theorem of Khare and Wintenberger, it is enough to check that $g$ is not congruent to any other modular form of weight 2 and with level dividing $N'$ (since $N'$ is small, there will be only a few of such forms if any).
Step 4 is the hardest one because, in general, calculating the space $S_2(\Gamma_0(N),\varepsilon)$ with nontrivial $\varepsilon$ is hard. Notice, however, that for all $f$ in the table, the letter following~"2." is~"a", which means that $f$ has trivial nebentypus. This is not a coincidence; in fact, the following lemma reduces the search to forms with trivial nebentypus.

\begin{lemma} Let $g\in S_2(\Gamma_0(N'))$ be a newform. 
Let $\alpha \in \{1,2\}$ and $\lambda,\ell,p$ be primes such that $g$ satisfies one of the conditions of  Proposition~\ref{DiamondTaylor}. Assume also $\ell\nmid \varphi(N')$, where~$\varphi$ is Euler totient function. Then we can find $f$ congruent to $g$ modulo~$\lambda$ in $S_2(\Gamma_0(p^\alpha N')).$
\end{lemma}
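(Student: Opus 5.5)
Proposition~\ref{DiamondTaylor} produces a newform $f\in S_2(\Gamma_0(p^\alpha N'),\eps)$ with $\rr{\rho_{f,\lambda}}\simeq\rr{\rho_{g,\lambda}}$, and the only remaining issue is the nebentypus~$\eps$. The plan is to show that the reduction of $\eps$ modulo~$\lambda$ is trivial, and then to replace $f$ by a twist whose nebentypus is trivial, without changing the residual representation or the level.

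\textbf{Step 1: the reduction of $\eps$ is trivial.} Comparing determinants in $\rr{\rho_{f,\lambda}}\simeq\rr{\rho_{g,\lambda}}$, and using that $g$ has trivial nebentypus, gives $\overline{\omega_\ell}\,\rr{\eps}=\overline{\omega_\ell}$. Hence $\rr{\eps}=1$, so $\eps$ takes values in $\ell$-power roots of unity. We may therefore view $\eps$ as a character of $(\Z/p^\alpha N'\Z)^*$ of $\ell$-power order. Its component at the primes of $N'$ factors through $(\Z/N'\Z)^*$, which has order $\varphi(N')$. Since $\ell\nmid\varphi(N')$, that component is trivial, and so $\eps$ is a character of $\ell$-power order of $(\Z/p^\alpha\Z)^*$. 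Because $\ell\neq p$, it factors through $(\Z/p\Z)^*$, which is cyclic of order $p-1$.

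\textbf{Step 2: construct a square root and twist.} Since $\ell$ is odd and $\eps$ has $\ell$-power order, there is a unique character $\psi$ of $\ell$-power order with $\psi^2=\eps$, namely $\psi=\eps^{(m+1)/2}$ where $m$ is the order of $\eps$. Then $\psi$ is also a character of $(\Z/p\Z)^*$, and $\rr{\psi}=1$. Consider $f\otimes\psi^{-1}$. Its nebentypus is $\eps\psi^{-2}=1$ and its Galois representation is $\rho_{f,\lambda}\otimes\psi^{-1}$, whose reduction equals $\rr{\rho_{f,\lambda}}\simeq\rr{\rho_{g,\lambda}}$. Let $f'$ be the newform attached to this twist. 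Then $f'$ has trivial character and is congruent to $g$ modulo~$\lambda$.

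\textbf{Step 3: the level is preserved (main obstacle).} Twisting by $\psi^{-1}$ does not change the local representation at primes $q\mid N'$ or $q\ne p$, so the conductor away from $p$ is $N'$. At $p$ we have to check that $a(\rho|_{G_{\Q_p}}\otimes\psi^{-1})=\alpha$. I would split into the cases of Proposition~\ref{Carayolcharacterization}, which apply because the residual conductor at $p$ is $0$ or $1<\alpha$, together with the level-one-at-$p$ analogue when $\alpha=1$.
\begin{enumerate}[(i)]
\item In the principal series case $\chi_1\oplus\chi_2$ with $\chi_1\chi_2=\eps$ tamely ramified, $\rr{\chi_1}$ unramified and $\rr{\chi_2}$ unramified (or of conductor~$1$), twisting by $\psi^{-1}$ gives tame characters again. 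The twisted conductor is $a(\chi_1\psi^{-1})+a(\chi_2\psi^{-1})\le 2$, and the product $\chi_1\chi_2\psi^{-2}$ is unramified.
\item In the special case, $\chi^2=\eps$ forces $\chi\psi^{-1}$ to be unramified or quadratic. The twist has conductor $1$ or $2$, which is again at most $\alpha$, or equal to $\alpha$ after a further check using $\alpha$.
\item In the supercuspidal case, twisting preserves the conductor $2$.
\end{enumerate}
The delicate point is ensuring the new level is exactly $p^\alpha N'$ rather than smaller. If the twist only yields level dividing $p^\alpha N'$, the congruence still lands in $S_2(\Gamma_0(p^\alpha N'))$ as an oldform contribution. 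If the lemma is meant literally with newforms, I would use the conductor equalities in Proposition~\ref{Carayolcharacterization} and the fact that $\psi$ is tame of $\ell$-power order to show that twisting cannot lower $a(\cdot)$ below $\alpha$.
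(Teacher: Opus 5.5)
Step~1 essentially matches the first half of the paper's argument. One correction: when $p\mid N'$ the $p$-component of $\eps$ lives on $(\Z/p^{\alpha+v_p(N')}\Z)^*$, not on $(\Z/N'\Z)^*$. The clean statement is that $\ell\nmid p^\alpha\varphi(N')=\varphi(p^\alpha N')$ forces $\eps=1$. So if $\eps\neq1$, then $p\nmid N'$ and $\ell\mid p-1$, and we are in case 2)(a), with $\alpha=2$.

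\textbf{The gap is in Step~3.} Your claim that twisting by $\psi^{-1}$ cannot lower the conductor at $p$ is false, and it fails exactly in the case you need.
\begin{enumerate}[(i)]
\item In case 2)(a), the conclusion of Proposition~\ref{DiamondTaylor} is already satisfied by $f=g\otimes\psi$, for any character $\psi$ of $(\Z/p\Z)^*$ of order $\ell$. This is a newform of level $p^2N'$ with character $\psi^2$ and the same residual representation. For this $f$ your square root is exactly $\psi$, and $f\otimes\psi^{-1}=g$ has level $N'$.
\item In the principal series case, the twisted inertial type is $\mu\oplus\mu^{-1}$ with $\mu=\chi_1\psi^{-1}|_{I_p}$. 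It is unramified whenever $\chi_1|_{I_p}=\psi|_{I_p}$.
\item In the special case, $\chi^2|_{I_p}=\psi^2|_{I_p}$ with both characters of odd $\ell$-power order forces $\chi|_{I_p}=\psi|_{I_p}$ (not ``unramified or quadratic''). So the twist always has conductor exponent exactly $1$ at $p$, i.e.\ level $pN'$.
\end{enumerate}
There is no other twist to try: $\psi$ is the unique $\ell$-power-order square root of $\eps$, and twisting by anything with nontrivial reduction would change $\rr{\rho_{f,\lambda}}$. Your fallback of accepting an oldform makes the lemma empty, since $g$ itself lies in $S_2(\Gamma_0(p^\alpha N'))$. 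The lemma is about newforms of level exactly $p^\alpha N'$, which is what the tables with $v_p(N)=2$ require.

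\textbf{The missing idea.} You cannot merely modify the $f$ handed to you by Proposition~\ref{DiamondTaylor}. You need a genuinely new lift of $\rr{\rho_{g,\lambda}}$ with a prescribed inertial type at $p$ that has trivial determinant and conductor exponent $2$. The paper prescribes the following data:
\begin{enumerate}[(i)]
\item at $p$, the principal series $\psi_1\eps\oplus\psi_2\eps^{-1}$, where $\rho_{g,\lambda}|_{G_{\Q_p}}\simeq\psi_1\oplus\psi_2$ with the $\psi_i$ unramified;
\item at $r\mid\ell N'$, the restrictions of $\rho_{f',\lambda}$;
\item cyclotomic determinant.
\end{enumerate}
Since $\rr{\rho_{g,\lambda}}|_{G_{\Q(\zeta_\ell)}}$ is absolutely irreducible, \cite[Th\'eor\`eme 3.2.2]{BrDi} gives a newform realizing these local data. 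Its type at $p$ is $\eps\oplus\eps^{-1}$ on inertia. This has unramified determinant and conductor exponent $a(\eps)+a(\eps^{-1})=2$, because $\eps$ has conductor $p$. Since $\eps\neq\eps^{-1}$ on $I_p$, it is not a twist of an unramified type, so it cannot collapse the way your twist does.
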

\begin{proof} Observe that $p > 3$ because $\ell \mid (p-1)(p+1)$ and $\ell > 3$.
    By Proposition~\ref{DiamondTaylor}, there exists a newform $f'\in S_2(\Gamma_0(p^\alpha N'),\varepsilon)$ congruent to $g$ modulo~$\lambda$. Assume that $\varepsilon$ is nontrivial; otherwise, there is nothing to show. Comparing determinants, we must have $\overline{\varepsilon}=1$ which means that the order of $\varepsilon$ is an $\ell$-power, say $\ell^\beta$. 
    Moreover, $\varepsilon$ being a Dirichlet character of modulo $p^\alpha N'$, we have $ \ell^\beta \mid \varphi(p^\alpha N')$.

    If $p\mid N'$, then $\varphi(p^\alpha N')=p^\alpha\varphi(N')$ is not divisible by $\ell$ by assumption, a contradiction. Therefore, $p\nmid N'$ and $\varphi(p^\alpha N')=(p-1)p^{\alpha-1}\varphi(N')$. 
    We conclude that $\ell^\beta \mid p-1$ and $\varepsilon$ factors via $(\Z/p\Z)^*$ and so it
    has conductor~$p$. Furthermore, we are in case 2) (a) of Proposition~\ref{DiamondTaylor} and, in particular, $\alpha = 2$.

    Denote by $\rho'$ the representation $\rho_{f',\lambda}$. By assumption, we have
    $$\overline{\rho_{f',\lambda}}\simeq \overline{\rho_{g,\lambda}}.$$
    Since $p\nmid N'$, the inertia type of $g$ at $p$ is unramified principal series, i.e,
    $$\rho_{g,\lambda}|_{G_{\Q_p}}\simeq \psi_1\oplus \psi_2$$
    for some unramified characters $\psi_1,\psi_2.$
    Consider the following data:
    \begin{itemize}
        \item the restriction $\rho_r:=\rho'|_{G_{\Q_r}}$ for every prime $r\mid \ell N',$
        \item the Principal Series representation $\rho_p:=\psi_1\eps\oplus \psi_2\eps^{-1}$ of $G_{\Q_p},$
        \item the $\ell$-adic cyclotomic character $\psi=\omega_\ell.$
    \end{itemize}
    From \cite[Lemma 1.13, Lemma 1.14]{DiPa} the representation $\overline{\rho_{g,\lambda}}|_{G_{\Q(\zeta_l)}}$ is absolutely irreducible. This allows us to apply \cite[Th\'eor\`eme 3.2.2]{BrDi} to get a newform $f$ such that $\rho_{f,\lambda}$ is unramified outside $p\ell N'$ with determinant $\psi$, and moreover,
    $$\rho_{f,\lambda}|_{G_{\Q_r}}\simeq \rho_r,\ \ \text{for every }r\mid \ell p N',$$
    $$\overline{\rho_{f,\lambda}}\simeq\overline{\rho_{g,\lambda}}.$$
    The local conditions, together with the determinant, imply that $f$ is a newform in $S_2(\Gamma_0(p^2 N'))$.
\end{proof}
 Showing that two newforms are congruent can still be hard (if Sturm's bound is big). In our case, we got an advantage because we already know that there is some newform congruent to~$g$, so following the famous logic of Sherlock Holmes, instead of showing that a certain form $f$ is congruent to $g$, it is enough to show that all the other newforms are not congruent to $g$, which is a lot easier.

\begin{example}
From Table \ref{Table 2,0} we have another method for obtaining examples of the case $v_p(N')=2$ and $v_p(N')=1.$ Indeed, as a consequence of Carayol's Theorem (Proposition~\ref{Carayolcharacterization}), reducing any $f$ from Table \ref{Table 2,0} modulo any prime $\lambda'$ of $\Q_f$ kills some ramification modulo~$p$. If $e(\lambda|\ell)<\ell-1$ and $\overline{\rho_{f,\lambda'}}$ is irreducible, then by Theorem \ref{maintheorem} the only possiblity is 
$$v_p(N)=2\ \ \text{and}\ \ v_p(N')=1.$$    
\end{example}

\begin{remark}
Finally, let us comment on the case b) of Theorem \ref{maintheorem}. The case $v_p(N)>2$ leads rapidly to very high levels and very big coefficient fields. Indeed, to apply the method above, the level of $f$ would have to be divisible by $p^3$, where $\ell\mid p-1$ for some $\ell>3$, and we are in case 2)(b) of Proposition~\ref{DiamondTaylor}. 
The third smallest such $p$ is $p=29$ (here $\ell=7$), leading to $29^3=24389\mid N$. Moreover, the condition $v_p(\mathfrak{f}(\eps'))=v_p(N')\geq 2$ forces choosing $g$ with non-trivial nebentypus, leading to harder computations and higher degree coefficient fields. For example, already for the two smallest cases
$(\ell,p) = (5,11),(11,23)$ the possibilities for $g$ have coefficient fields of degrees $\geq 100$.
\end{remark}

\bibliographystyle{amsplain}
\bibliography{ref}
\end{document}